\documentclass[10pt,a4paper,reqno]{amsart}
\usepackage{amsfonts,amsthm,latexsym,amsmath,amssymb,amscd,amsmath, epsf}

\newtheorem{theorem}{Theorem}

\newtheorem{proposition}{Proposition}

\newtheorem{example} {Example}

\newtheorem{conjecture}{Conjecture}

\newcommand {\bC} {\mathbb {C}}
\newcommand {\bR} {\mathbb R}

\newcommand {\bCP} {\mathbb {CP}}
\newcommand {\bP} {\mathbb {P}}
\newcommand {\bRP} {\mathbb {RP}}

\newcommand {\HH} {\mathcal H}

\newcommand {\cP} {\mathcal P}

\newcommand{\LL}{\mathcal L}

\newcommand{\uu}{\mathbf u}
\newcommand{\xx}{\mathbf x}

\theoremstyle{plain}

 \theoremstyle{definition}

\begin{document}
             \numberwithin{equation}{section}

             \title [Canonical forms and moment-generating functions of plane polypols]
             {Canonical forms and moment-generating functions of plane polypols}


\author[B.~Shapiro]{Boris Shapiro}
\address{Department of Mathematics, Stockholm University, SE-106 91, Stockholm,
            Sweden}
\email{shapiro@math.su.se}

\date{\today}
\keywords{algebraic body, positive geometry, canonical form, polypol, adjoint curve, moments, Cauchy transform, Fantappi\`e transform}
\subjclass[2020]{Primary 14P10, 52B11; Secondary 14Q05, 30E05, 65D17}

\begin{abstract}
We study two closely related objects associated with plane domains bounded by
rational algebraic arcs: canonical forms in the sense of positive geometry and
normalized moment-generating functions, or Fantappi\`e transforms.  For polygons
these objects are related by polarity: the normalized Fantappi\`e transform of a
polygon is the canonical form of the polar polygon.  For genuinely curved
polypols the same dual-geometric mechanism survives, but the transform is no
longer a rational logarithmic canonical form; rather, it is a holonomic,
generally branched period whose singularities are controlled by vertex
hyperplanes and by the projective dual curves of the nonlinear boundary
components.  We give explicit examples, including sectors and half-disks, and
explain how harmonic moment generating functions arise as one-dimensional
restrictions of the same Fantappi\`e transform.
\end{abstract}

\maketitle


\section{Introduction}
\label{sec:int}

 Given a real algebraic projective hypersurface $\HH\subset \bRP^N$ (not necessarily irreducible), we call a connected component $\Theta$ of the complement $\bRP^N\backslash \HH$ an \emph{algebraic body}. Let $\widetilde \Theta\subset \bR^{N+1}$ be the cone over $\Theta$. By the Laplace transform of $\Theta$ we denote the function
 \begin{equation}\label{eq:Laplace}
\LL_\Theta(U)=\frac{1}{N!}\int_{X\in \widetilde \Theta}e^{-U\cdot X}d^{N+1}X.
 \end{equation}
 For this integral to converge we need the body to be contained in some affine chart in $\bRP^N$ and $U$ to lie in the interior of the dual cone $\widetilde \Theta^{\vee}$. 
 
 Let us provide the affine version which we will use below for bounded algebraic bodies lying in an affine chart of $\bRP^N$.  Writing $X=\rho \widehat X$ and using the standard projective volume convention gives
\[
 d^{N+1}X=\rho^N d\rho\,\langle \widehat X d^N \widehat X \rangle,
\]
where $\langle \widehat X d^N \widehat X \rangle$ denotes the induced projective volume form.  Hence integration in the radial variable gives
\[
\LL_\Theta(U)=\int_{\widehat X\in \Theta}
\frac{\langle \widehat X d^N \widehat X \rangle}
{(\widehat X\cdot U)^{N+1}},
\]
up to the harmless global normalization fixed in \eqref{eq:Laplace}.

In the affine version, we have a bounded algebraic body $\Theta\subset \bR^N$ equipped with coordinates $x_1,x_2,\dots, x_N$.
We define its moments $m_I(\Theta)$ as 
$$m_I(\Theta):=\int_\Theta x_1^{i_1}x_2^{i_2}\dots x_N^{i_N} dx_1dx_2\dots dx_N,$$
where $I=(i_1,i_2,\dots, i_N)$ is an arbitrary multiindex. 
Define the normalized moment generating function
$$F_\Theta(\uu):=\sum_{I:=(i_1,\dots, i_N)\ge 0}\frac{(| I |+N)!}{i_1!\dots i_N!} m_I(\Theta)\uu^I.
$$
It turns out that $F_\Theta(\uu)$ admits an integral presentation as
\begin{equation}\label{eq:main}
F_\Theta(\uu)=N! \int_\Theta \frac {dx_1dx_2\dots dx_N}{(1-\langle \xx, \uu\rangle)^{N+1}}
\end{equation}
which is a special case of the so-called Fantappi\`e transform. 

Up to a constant, $\LL_\Theta(U)$ is the homogenization of $F_\Theta(\uu)$.  For a general algebraic body the analytic continuation of $F_\Theta(\uu)$ is transcendental, and its explicit calculation is a difficult period problem for the affine hypersurface defining $\Theta$.  However, the special case in which $\Theta$ is bounded by rational hypersurfaces is already of interest and is related to the recent activity around positive geometries; compare \cite{AHBL}. It is also related to earlier advances in applied mathematics and computer graphics involving generalized barycentric subdivision; see \cite{Wa}. 

We recall the notion of a canonical form for an algebraic body with rational boundary, following \cite{AHBL}.

Let $X$ be a complex projective algebraic variety, which is the solution set in $\bCP^N$ of a finite set of homogeneous polynomial equations. We will assume that the polynomials have real coefficients. We then denote by $X(\bR)$ the real part of $X$, which is the solution set in $\bRP^N$ of the same set of equations.

A semialgebraic set in $\bRP^N$ is a finite union of subsets, each of which is cut out by finitely many homogeneous real polynomial equations $\{x \in \bRP^N | p(x) = 0\}$ and homogeneous real polynomial inequalities 
$\{x \in \bRP^N | q(x) > 0\}$. To make sense of the inequality $q(x) > 0$, we first find solutions in $\bR^{N +1}\backslash\{0\}$, and then take the image of the solution set in $\bRP^N$.
We define a D-dimensional positive geometry to be a pair $(X,X_{\ge 0})$, where $X$ is an irreducible complex projective variety of complex dimension $D$ and $X_{\ge 0} \subset X(\bR)$ is a nonempty oriented closed semialgebraic set of real dimension $D$ satisfying some technical assumptions 
together with the following recursive axioms:
\begin{itemize}
\item For $D=0$: $X$ is a single point and we must have $X_{\ge 0} =X$. We define the $0$-form $\Omega(X,X_{\ge 0})$ on $X$ to be $\pm 1$ depending on the orientation of $X_{\ge 0}$.
\item For $D>0$: we must have

(P1) Every boundary component $(C,C_{\ge 0})$ of $(X,X_{\ge 0})$ is a positive geometry of dimension $D-1$.

(P2) There exists a unique nonzero rational $D$-form $\Omega(X,X_{\ge 0})$ on $X$ constrained by the residue relation $Res_C \Omega(X, X_{\ge 0}) = \Omega(C, C_{\ge 0})$ along every boundary component $C$, and no singularities elsewhere.
\end{itemize}

The residue operator $Res$ is defined in the following way. Let $\omega$ be a meromorphic form on $X$. Suppose $C$ is an irreducible subvariety of $X$ and $z$ is a holomorphic coordinate whose zero set $z = 0$ locally parametrizes $C$. Let us denote the other holomorphic coordinates collectively as $u$. Then a simple pole of $\omega$ at $C$ is a singularity of the form
$$\omega(u, z) = \omega^\prime(u) \wedge \frac{dz}{z}+ \dots $$
where the $\dots$ denotes terms smooth in the small $z$ limit, and $\omega^\prime(u)$ is a non-zero meromorphic form defined locally on the boundary component. We (locally) define
$$Res_C \omega := \omega^\prime.$$
 If there is no such simple pole, then we define the residue to be zero.
 
 Canonical forms have the following two features which will be used below.  First, they are birationally functorial: if a positive geometry is parametrized by simpler pieces, its canonical form is obtained as a signed push-forward of the logarithmic form on these pieces.  Secondly, for a compact oriented region in an affine plane with a normal-crossing boundary, the canonical form is the unique meromorphic two-form whose only poles are simple poles along the complexified boundary and whose iterated residues at oriented vertices are equal to $\pm 1$.  These two properties are the main reason why planar polypols are natural test examples for positive geometry beyond the polyhedral case.
 
 \medskip
 Our main purpose is to clarify how far the familiar polytope relation between
positive geometries and moment-generating functions extends beyond the
polyhedral case.  The answer is twofold.  First, regular rational polypols admit
canonical forms of Wachspress type: the denominator is the reduced boundary
equation and the numerator is the adjoint which cancels spurious residual
intersections of the complexified boundary.  Secondly, their normalized
moment-generating functions are boundary periods obtained from the same rational
arcs by Green's formula.  For polygons these periods collapse to rational
functions and reproduce the canonical forms of polar polygons.  For curved
polypols they are still governed by projective duality, but they may acquire
algebraic, logarithmic, or arctangent terms and should be regarded as branched
or period-valued analogues of dual canonical forms.

 \medskip
 The paper is organized as follows.  Section~\ref{sec:prel} recalls the
polytope case, introduces polypols, and states the two basic results used
throughout the paper: the Wachspress form of the canonical form and the
boundary-period formula for the Fantappi\`e transform.  Section~\ref{sec:proofs}
contains the proofs, including the important dual statement.  The next two
sections give explicit canonical forms and explicit normalized moment-generating
functions.  Section~\ref{sec:harm} explains the relation with harmonic moments.
The final section summarizes the resulting positive-geometry interpretation and
lists some further questions.

\section{Preliminaries}\label{sec:prel} 

\subsection{Relation between a positive geometry and moment-generating function for polytopes} 
For a convex polytope $P\subset \bR^N$, the corresponding positive geometry is the pair
$$
(\bCP^N,P),
$$
where $P$ is viewed as the closure of an oriented semialgebraic subset of $\bRP^N$ in an affine chart.  Its boundary strata are the faces of $P$, with their induced orientations.  If $P$ is given as the intersection of half-spaces $L_i(x)\geq 0$, then its canonical form has the shape
$$
\Omega_P=\frac{A_P(x)}{\prod_i L_i(x)}dx_1\wedge\cdots\wedge dx_N,
$$
where $A_P$ is the adjoint numerator cancelling the extraneous poles produced by non-face intersections of the supporting hyperplanes.  This is one of the basic examples of positive geometry in \cite{AHBL}; the relation with Wachspress coordinates and adjoints is developed in \cite{Wa,Wa75,Warren,KoRa,GaetzAdjoints}, and the rational moment-transform formula for polytopes is closely related to \cite{GrPaShSh}.

The normalized moment generating function is naturally a dual object.  If
$$
F_P(u)=N!\int_P\frac{dx_1\cdots dx_N}{(1-\langle x,u\rangle)^{N+1}},
$$
then, for a polytope, $F_P(u)du_1\wedge\cdots\wedge du_N$ is the canonical form of the polar positive geometry
$$
P^\circ=\{u\in(\bR^N)^*\mid 1-\langle v,u\rangle\geq 0\quad\hbox{for all vertices }v\hbox{ of }P\},
$$
up to the usual orientation convention.  Thus the apparent equality between canonical forms and moment generating functions should be interpreted after passing to the polar geometry.

\begin{proposition}[Polytope case]\label{prop:Poly}
Let $P\subset \bR^N$ be a full-dimensional polytope with vertices $V(P)$, and assume that the origin is in the interior of $P$ so that the polar body is bounded.  Then
$$
F_P(u)=\frac{A_{P^\circ}(u)}{\prod_{v\in V(P)}(1-\langle v,u\rangle)},
$$
where $A_{P^\circ}$ is the adjoint numerator of the polar polytope.  Equivalently,
$F_P(u)du_1\wedge\cdots\wedge du_N$ is the canonical form of $P^\circ$, up to the normalization fixed by orientation.
\end{proposition}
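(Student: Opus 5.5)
The plan is to reduce to simplices by triangulation and then identify the result with the canonical form of $P^\circ$ through its characterizing residue properties.

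\emph{Simplex computation.} Triangulate $P$ into simplices $\Delta=\mathrm{conv}(v_0,\dots,v_N)$ using only vertices of $P$ (for instance a pulling triangulation). Since $F_P$ is additive in the domain, $F_P=\sum_\Delta F_\Delta$. For a single simplex, parametrize by barycentric coordinates $x=\sum_j t_j v_j$ with $t_j\ge0$ and $\sum t_j=1$, so that $dx=N!\,\mathrm{vol}(\Delta)\,dt$. Then $1-\langle x,u\rangle=\sum_j t_j(1-\langle v_j,u\rangle)$ is linear in $t$. The classical Dirichlet integral
$$\int_{\text{std simplex}}\frac{dt}{(\sum_j t_j a_j)^{N+1}}=\frac{1}{N!\prod_j a_j}$$
(obtained by homogenizing and using the Laplace-transform trick of \eqref{eq:Laplace}, $\int_{\bR_{\ge0}^{N+1}}e^{-\sum t_ja_j}dt=1/\prod a_j$) gives
$$F_\Delta(u)=\frac{N!\,\mathrm{vol}(\Delta)}{\prod_{j=0}^N(1-\langle v_j,u\rangle)}.$$
This is exactly the canonical form of the simplex $\Delta^\circ$ cut out by the hyperplanes $1-\langle v_j,u\rangle=0$: the numerator $N!\,\mathrm{vol}(\Delta)$ equals $|\det(\text{the } (N+1)\times(N+1) \text{ matrix of rows } (1,v_j))|$, which is the standard normalization of a simplex canonical form.

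\emph{Summing.} After bringing the simplex terms to a common denominator,
$$F_P=\frac{A(u)}{\prod_{v\in V(P)}(1-\langle v,u\rangle)}$$
for a polynomial $A$. It remains to show that $F_P\,du$ is the canonical form of $P^\circ$. The hyperplanes $H_v=\{1-\langle v,u\rangle=0\}$ are precisely the facet hyperplanes of $P^\circ$, since vertices of $P$ correspond to facets of $P^\circ$. I would use uniqueness from (P2): it suffices to check that $F_P\,du$ has at most simple poles along these hyperplanes, that its residues there are the canonical forms of the facets of $P^\circ$, and that it has no other poles. The first and last properties are clear from the formula. For the residues, I would induct on dimension. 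The residue along $H_v$ collects only the simplices containing $v$. Projecting from $v$, these simplices form a triangulation of the vertex figure of $P$ at $v$, whose polar is the facet $P^\circ\cap H_v$. The residue of each simplex term is the lower-dimensional simplex formula, so by induction the sum is the canonical form of that facet. A cleaner alternative is to invoke the standard fact from \cite{AHBL} that canonical forms are additive under triangulation, since spurious internal faces cancel with opposite orientations. Applied to the dual picture, this means $\sum_\Delta\Omega(\Delta^\circ)$ is the canonical form of $P^\circ$, via the polar/"signed" decomposition of $P^\circ$ induced by triangulating $P$.

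\emph{Main obstacle.} The hard step is justifying that the triangulation of $P$ yields a signed decomposition of $P^\circ$ into the simplices $\Delta^\circ$, so that additivity applies. The simplices $\Delta^\circ$ are unbounded or overlapping in general, so the naive statement fails. I would therefore avoid it and use instead the residue/induction argument together with uniqueness of the canonical form. The uniqueness step needs $P^\circ$ bounded, which holds because $0$ lies in the interior of $P$. It also needs orientation signs tracked consistently, which accounts for the phrase "up to orientation" in the statement.
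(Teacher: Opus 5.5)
The paper does not prove Proposition~\ref{prop:Poly}. It is stated in the preliminaries as a known input from the positive-geometry and polytope-moment literature, and then used in the proofs of Theorems~\ref{th:GenFucnt} and~\ref{th:dual-transform}. The mechanism it points to is the Laplace transform \eqref{eq:Laplace}: the homogenization of $F_P$ is the Laplace transform of the cone $\widetilde P$. In the Arkani-Hamed--Bai--Lam picture this is the canonical (dual-volume) function of the dual cone, i.e.\ of $P^\circ$. Along that route one typically checks simplices and then needs the fact that a triangulation of $P$ induces a signed decomposition of $P^\circ$ --- exactly the step you flag and sidestep.

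Your argument is correct and takes a genuinely different path. The Feynman--Dirichlet formula on a vertex triangulation gives $F_P=A/\prod_v L_v$, with $L_v=1-\langle v,u\rangle$, so only facet hyperplanes of $P^\circ$ appear in the denominator. Uniqueness in (P2) then reduces the identification to a residue computation, which you handle by induction through vertex figures. Your route gives a self-contained proof with no dual-subdivision combinatorics, and it shows concretely how spurious hyperplanes cancel. The Laplace route gives a non-inductive formula and the dual-volume interpretation.

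When you write it out, make the following points explicit.

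\emph{Infinity.} ``No other poles'' must include the hyperplane at infinity. Each simplex term has exactly $N+1$ linear factors, so the form is regular there. This is what closes uniqueness: a form $B\,du/\prod_vL_v$ with all residues zero has $\prod_vL_v\mid B$, so it equals $g\,du$ with $g$ polynomial, and this is regular at infinity only if $g=0$.

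\emph{Polarity.} ``Whose polar is the facet'' must identify hyperplane arrangements, not just combinatorial types. On $H_v$ one has $1-\langle w,u\rangle=\langle v-w,u\rangle$. Fix an $(N-1)$-form $\eta_v$ with $dL_v\wedge\eta_v=du$. Then the residue of the $\Delta$-term is $|\det(w_1-v,\dots,w_N-v)|\,\eta_v/\prod_i\langle v-w_i,u\rangle$ restricted to $H_v$. This is the Laplace transform at $-u$ of the simplicial cone spanned by the $w_i-v$. These cones subdivide the tangent cone $T_v=\operatorname{cone}\{x-v:x\in P\}$. Moreover $-v\in\operatorname{int}T_v$ because $0\in\operatorname{int}P$, which puts the projected origin inside the vertex figure.

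\emph{Cone formulation.} The induction is cleanest if you state the proposition for pointed cones from the outset. The residue along the hyperplane dual to the extreme ray $r=(1,v)$ of $\widetilde P$ is then literally the same transform for the image of $\widetilde P$ in $\bR^{N+1}/\bR r$, and that image is $T_v$.

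\emph{Spurious hyperplanes.} Non-neighbours $w$ of $v$ project to non-vertices of the vertex figure, so $H_w\cap H_v$ is spurious for the facet. It cancels only because the inductive hypothesis is applied to the full transform of the vertex figure, which does not depend on the subdivision.

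\emph{Signs.} All contributions to $\operatorname{Res}_{H_v}$ have the same sign. Agreement with the induced boundary orientation then follows from $F_P>0$ on $\operatorname{int}P^\circ$.
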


\subsection{Polypols} 
Following Wachspress, we call a bounded path-connected plane semialgebraic domain whose boundary is a union of real rational arcs a \emph{plane polypol}.  If all boundary components have degree at most two we call it a \emph{polycon}.  We shall always assume that the boundary is regular in the sense that the chosen arcs meet only at finitely many vertices and that the complexified boundary has no component contained in another one.  For such domains, the recent work on adjoints and canonical forms of polypols gives the following positive-geometry statement.

\begin{theorem}[Canonical form of a regular rational polypol]\label{th:CanForm}
Let $\cP\subset \bR^2$ be a regular rational polypol.  Let
$$
B_{\cP}(x,y)=\prod_\ell p_\ell(x,y)
$$
be the reduced equation of its complexified boundary.  Then $(\bCP^2,\overline{\cP})$ is a two-dimensional positive geometry and its canonical form is
$$
\Omega_{\cP}=\kappa_{\cP}\frac{A_{\cP}(x,y)}{B_{\cP}(x,y)}dx\wedge dy,
$$
where $A_{\cP}$ is the Wachspress adjoint curve of $\cP$.  The constant $\kappa_{\cP}$ is determined by the convention that the residue along each oriented boundary arc is the canonical one-form of that arc, and the iterated residues at vertices are $\pm1$.
\end{theorem}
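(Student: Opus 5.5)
The natural route is through the adjoint theory of Kohn--Ranestad and Wachspress. We first build a candidate form with the required pole structure, then check residues, and finally get uniqueness from the absence of holomorphic two-forms on $\bCP^2$.

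\emph{Step 1: the candidate form.} Let $d_\ell=\deg p_\ell$ and $d=\sum_\ell d_\ell=\deg B_{\cP}$. Since each $p_\ell$ defines a rational curve, its geometric genus is zero. Its singular points therefore contribute exactly $\binom{d_\ell-1}{2}$ to the delta-invariant count. The residual points are the points of $\bigcup_\ell\{p_\ell=0\}$ that are either singular points of a single component or intersection points of two components which are \emph{not} vertices of $\cP$. We define $A_{\cP}$ as the unique, up to scalar, curve of degree $d-3$ through these residual points, imposing adjoint conditions at the singular points. One checks that this linear system has projective dimension zero by the count
\[
\binom{d-1}{2}-\sum_\ell\binom{d_\ell-1}{2}-\sum_{\ell<m}d_\ell d_m+\#\{\text{vertices}\}=\#\{\text{vertices}\}-\#\{\text{components}\}+1-1 .
\]
For a polypol whose boundary is a single closed cycle of arcs, the number of vertices equals the number of arcs. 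Hence the expected dimension is $0$, and the existence and uniqueness of $A_{\cP}$ follows from the results on adjoints of polypols \cite{KoRa,Wa75}. The form $\omega=A_{\cP}\,dx\wedge dy/B_{\cP}$ is homogeneous of total degree zero in projective coordinates. Because $\deg A_{\cP}=d-3$, it has no pole along the line at infinity.

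\emph{Step 2: residues.} Along a component $C_\ell=\{p_\ell=0\}$ we take a rational parametrization $\phi_\ell:\bCP^1\to C_\ell$. The Poincaré residue of $\omega$ pulls back to
\[
\phi_\ell^*\frac{A_{\cP}}{\partial_y p_\ell\prod_{m\neq \ell}p_m}dx .
\]
The adjoint conditions at the singular points of $C_\ell$ cancel the poles coming from the vanishing of $\partial_y p_\ell$ along the parametrization. Vanishing at the non-vertex intersection points cancels the poles coming from the other $p_m$. What is left is a rational one-form on $\bCP^1$ with simple poles exactly at the two parameter values $t_\ell^\pm$ corresponding to the vertices bounding the arc. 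A meromorphic one-form on $\bCP^1$ with only two simple poles is a constant multiple of $dt/(t-t^+_\ell)-dt/(t-t^-_\ell)$, which is the canonical form of the segment $[t^-_\ell,t^+_\ell]$. So each residue is a nonzero multiple $c_\ell$ of the canonical form of the arc. Nonvanishing holds because $A_{\cP}$ does not vanish at the vertices: otherwise the curve $A_{\cP}$ would meet $C_\ell$ in more than $d_\ell(d-3)$ points, counted with the adjoint multiplicities.

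\emph{Step 3: consistent normalization.} This is where I expect the main difficulty. At a vertex $v=C_\ell\cap C_m$, the iterated residue computed through $C_\ell$ equals $c_\ell\cdot(\pm1)$, and computed through $C_m$ it equals $c_m\cdot(\pm1)$. Both are the local value $\pm A_{\cP}(v)/\big(\{p_\ell,p_m\}(v)\prod_{k\ne\ell,m}p_k(v)\big)$, where $\{p_\ell,p_m\}$ is the Jacobian of $(p_\ell,p_m)$. Hence $|c_\ell|=|c_m|$. Walking around the boundary cycle makes all $|c_\ell|$ equal, and we define $\kappa_{\cP}$ as the reciprocal of this common value. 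Orientation compatibility, meaning that the signs agree with the induced boundary orientation, follows from the sign of $A_{\cP}/B_{\cP}$ being constant on the interior of $\cP$. That in turn needs $A_{\cP}$ to have no real zeros in $\cP$, i.e.\ the interlacing property of the adjoint, for which I would invoke \cite{KoRa}. If that result is unavailable, I would argue via the birational functoriality mentioned in the introduction: triangulate $\cP$ into curvilinear triangles, each a pushforward of a simplex, and sum their canonical forms. Spurious poles along interior cut arcs cancel in pairs, and the resulting form has the same poles and residues, so it agrees with $\kappa_{\cP}\omega$.

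\emph{Step 4: uniqueness and recursion.} The boundary components are arcs of rational curves, i.e.\ segments in $\bCP^1$, so they are one-dimensional positive geometries; this gives (P1). For (P2), the difference of two forms with the same residues has no poles at all. It is then a holomorphic two-form on $\bCP^2$, and $H^0(\bCP^2,K)=0$ forces it to vanish. This gives uniqueness and completes the proof.
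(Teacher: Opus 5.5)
The paper gives no proof of this theorem and instead refers to the adjoint and positive-geometry theory of polypols in the literature. Your self-contained route (adjoint, residues pulled back to the normalizations, uniqueness from $H^0(\bCP^2,K)=0$) is a reasonable alternative. However, Step~3 as written has a genuine gap.

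From the vertex comparison you keep only $|c_\ell|=|c_m|$. You then try to fix the signs by asserting that $A_{\cP}/B_{\cP}$ has constant sign on the interior. For curved arcs this is essentially Wachspress's conjecture on adjoints, which is not available in general. Without convexity, which the statement does not assume, it is false. For the non-convex quadrilateral with vertices $(-1,0),(0,3),(1,0),(0,1)$, the adjoint is the line $y=3/2$ through the two intersection points of opposite sides, and it crosses the interior. The extended inner sides also cross the interior, so $B_{\cP}$ changes sign there too. Yet this quadrilateral is a positive geometry.

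Your fallback does not close the gap. It presupposes that each curvilinear piece is a diffeomorphic birational image of a simplex, which is not available in general for sides of higher degree. The cut arcs also bring in new curves, vertices and residual points, so every piece poses the same normalization problem again.

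The missing observation is that no positivity is needed if you keep the signs. Write $\operatorname{Res}_{C_\ell}\omega=c_\ell\Omega_\ell$, where $\Omega_\ell$ is the canonical form of the arc on $C_\ell$ with its induced orientation.
\begin{itemize}
\item At a vertex $v$ where that arc ends and the arc on $C_m$ begins, $\operatorname{Res}_v\Omega_\ell=-\operatorname{Res}_v\Omega_m$, because the boundary is an oriented cycle.
\item At a normal crossing, iterated residues anticommute: $\operatorname{Res}_v\operatorname{Res}_{C_\ell}\omega=-\operatorname{Res}_v\operatorname{Res}_{C_m}\omega$.
\item Together these give $c_\ell=c_m$ exactly, so all $c_\ell$ equal one constant $c$.
\item Moreover $c\neq0$: otherwise every $p_\ell$ divides $A_{\cP}$, which is impossible since $\deg A_{\cP}=d-3<d$.
\end{itemize}
This also replaces your B\'ezout argument for $A_{\cP}(v)\neq0$, which tacitly assumed $p_\ell\nmid A_{\cP}$. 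Then set $\kappa_{\cP}=1/c$.

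A smaller point concerns your count in Step~1. It yields projective dimension $\#\{\text{vertices}\}-\#\{\text{components}\}$, not vertices minus arcs. So you are assuming that each irreducible boundary component carries exactly one arc and that the boundary is a single cycle. This is implicit in speaking of ``the'' Wachspress adjoint, but it should be stated. For the disk cut by two parallel chords, the adjoint system is a pencil, and the two-pole conclusion of Step~2 fails on the circle.
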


In this form the theorem is essentially the planar case of the Wachspress--positive-geometry correspondence for polypols; see \cite{KohnRanestadSinnWinterPolypols} and the earlier perspective of Wachspress \cite{Wa,Wa75}.  The adjoint condition says geometrically that $A_{\cP}$ passes through the residual intersection points of the complexified boundary curves, i.e. through those intersections which are forced by B\'ezout but are not real vertices of the polypol.  This is exactly the cancellation of spurious poles required by the axioms of a positive geometry.

\begin{theorem}[Fantappi\`e transform of a rational polypol]\label{th:GenFucnt}
Let $\cP\subset\bR^2$ be a regular rational polypol and set
$$
F_{\cP}(u,v)=2\int_{\cP}\frac{dxdy}{(1-ux-vy)^3}.
$$
Then $F_{\cP}$ is obtained by summing explicit one-dimensional integrals over rational boundary arcs.  Consequently it is an elementary multivalued function of $(u,v)$, built from algebraic functions, rational functions, logarithms and arctangent terms; equivalently it is a period of the rational differential form above and is holonomic in the parameters.  If $\cP$ is a polygon, all boundary integrals are rational and Proposition~\ref{prop:Poly} identifies $F_{\cP}$ with the canonical form of the polar polygon.  For genuinely curved boundary arcs one should not expect rationality; the upper half-disk example below already contains algebraic and arctangent terms, while the full disk gives the especially simple smooth-boundary model $2\pi(1-u^2-v^2)^{-3/2}$.
\end{theorem}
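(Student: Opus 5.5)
The plan is to reduce the area integral to a boundary integral by Green's formula, using an explicit primitive of the integrand, and then to evaluate the boundary integral arc by arc through the rational parametrizations. Set $w=1-ux-vy$. The form $2\,dx\wedge dy/w^3$ is exact on the complement of the line $\{w=0\}$. For $(u,v)$ near the origin this line misses $\overline{\cP}$, since $\cP$ is bounded. A convenient primitive is
$$
\eta=\frac{x\,dy-y\,dx}{w^2}.
$$
To check it, compute $d(x\,dy-y\,dx)=2\,dx\wedge dy$ and $d(w^{-2})=2w^{-3}(u\,dx+v\,dy)$. Wedging the second with $x\,dy-y\,dx$ gives $2w^{-3}(ux+vy)\,dx\wedge dy$. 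Therefore
$$
d\eta=\frac{2(w+ux+vy)}{w^3}\,dx\wedge dy=\frac{2}{w^3}\,dx\wedge dy.
$$
Stokes' theorem then gives $F_{\cP}(u,v)=\oint_{\partial\cP}\eta$. The boundary is oriented positively and is the union of the arcs $\gamma_\ell$. Regularity (finitely many vertices, no nested components) guarantees that $\partial\cP$ is a piecewise smooth closed curve, so Stokes applies.

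Next, each arc $\gamma_\ell$ is rational. Write it as $t\mapsto (x_\ell(t),y_\ell(t))=(a_\ell(t)/c_\ell(t),\,b_\ell(t)/c_\ell(t))$ for $t\in[t_\ell^-,t_\ell^+]$. Its contribution is
$$
\int_{t_\ell^-}^{t_\ell^+}\frac{c_\ell^{\,2}\,(a_\ell b_\ell'-b_\ell a_\ell')/c_\ell^{\,2}}{\bigl(c_\ell-ua_\ell-vb_\ell\bigr)^2}\,dt
=\int_{t_\ell^-}^{t_\ell^+}\frac{W_\ell(t)\,dt}{Q_\ell(t;u,v)^2}.
$$
Here $W_\ell=a_\ell b_\ell'-b_\ell a_\ell'$, and $Q_\ell=c_\ell-ua_\ell-vb_\ell$ is a polynomial in $t$ whose coefficients are linear in $(u,v)$. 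The integrand is a rational function of $t$. Partial fractions over the roots $t_j(u,v)$ of $Q_\ell$ write it as a rational part plus a sum of terms $r_j/(t-t_j)$. The roots $t_j$ are algebraic functions of $(u,v)$, and so are the residues $r_j$.

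Integrating term by term gives rational functions of $t_\ell^\pm$ and the $t_j$, together with logarithms $\log\bigl((t_\ell^+-t_j)/(t_\ell^--t_j)\bigr)$. Pairing complex conjugate roots turns these logarithms into real arctangents. This gives the elementary multivalued description. Holonomicity follows because $F_{\cP}$ is an integral over a fixed semialgebraic chain of a rational form depending polynomially on parameters. Such integrals satisfy a holonomic system (Gauss--Manin, or Bernstein's theory for integrals of rational functions with algebraic parameters). The branching locus is where $Q_\ell$ acquires a double root, i.e. where the line $\{w=0\}$ is tangent to $\gamma_\ell$, or where $Q_\ell(t_\ell^\pm)=0$, i.e. the line passes through a vertex. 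These are the projective dual curve of $\gamma_\ell$ and the vertex hyperplanes $1-\langle p,(u,v)\rangle=0$.

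For a polygon, each $a_\ell,b_\ell,c_\ell$ is linear, so $W_\ell$ is a constant and $Q_\ell$ is linear in $t$. The integral of $W_\ell/Q_\ell^2$ is then $-W_\ell/(Q_\ell'\,Q_\ell)$ evaluated at the endpoints, which is rational with poles only at the vertex lines. That recovers Proposition~\ref{prop:Poly}, which I will assume.

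For the disk, no arc parametrization is needed. I would use rotation invariance to set $v=0$ and compute
$$
2\int_{x^2+y^2\le1}(1-ux)^{-3}\,dx\,dy=4\int_{-1}^1\sqrt{1-x^2}\,(1-ux)^{-3}\,dx=2\pi(1-u^2)^{-3/2}.
$$
The last integral is a standard Beta or residue computation. For the half-disk, the diameter contributes a rational term. The semicircle, parametrized by $t=\tan(\theta/2)$ over a half-range, gives a $Q$ that is quadratic. This produces $\sqrt{1-u^2-v^2}$ and arctangent terms evaluated at finite endpoints, which do not cancel.

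The main obstacle is not the Green's formula step, which is routine, but justifying the \emph{genuine} non-rationality claim and the precise singularity description. One must check that the logarithmic or arctangent terms from the various arcs do not cancel globally. For the half-disk I would show this by exhibiting nontrivial monodromy around the dual conic $u^2+v^2=1$, which does not happen for a rational function. The general statement I would present only as "one should not expect rationality", exactly as formulated.
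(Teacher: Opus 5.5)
Your proposal is correct. Its architecture matches the paper's proof: Stokes reduction to $\partial\cP$, rational parametrization of the arcs, partial fractions, holonomicity of rational periods, rationality on segments, and a direct disk computation. The genuine difference is the choice of primitive.

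The paper uses $dy/(uQ^2)$ or $-dx/(vQ^2)$. This is quicker to verify and parallels the primitive $\frac1t\,d\bar z/(1-tz)$ used later for harmonic moments. However, it only makes sense off $u=0$ (resp.\ $v=0$), so it needs a continuation argument across the axes. Its arc terms also carry a spurious factor $1/u$ that cancels only after summing over the closed boundary. In fact it differs from your form by the exact form $\frac1u\,d(y/Q)$.

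Your Euler-type primitive $\eta=(x\,dy-y\,dx)/w^2$ is defined wherever $w\neq0$. Hence each arc term $\int W_\ell\,dt/Q_\ell^2$ is holomorphic near $(0,0)$ on its own. Because $\eta$ vanishes on rays through the origin, $\int_{\gamma_\ell}\eta$ is exactly the transform of the signed fan $\mathrm{conv}(0,\gamma_\ell)$. For a polygon, a side from $p$ to $q$ then contributes
\[
\det(p,q)\big/\bigl((1-\langle p,(u,v)\rangle)(1-\langle q,(u,v)\rangle)\bigr).
\]
This is the triangulation-from-the-origin formula for the polar canonical form. It is more informative than the paper's bare appeal to Proposition~\ref{prop:Poly}.

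To justify ``poles only at the vertex lines'', state explicitly that the factor $1/Q_\ell'$ in $-W_\ell/(Q_\ell'Q_\ell)$ cancels. This holds because $Q_\ell(t^-_\ell)-Q_\ell(t^+_\ell)=-Q_\ell'\,(t^+_\ell-t^-_\ell)$.

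Your homogeneous form $Q_\ell=c_\ell-ua_\ell-vb_\ell$ is linear in $(u,v)$. This makes the vertex-line/dual-curve description of the branch locus cleaner than the paper's affine version, although that part properly belongs to Theorem~\ref{th:dual-transform}.
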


Theorem~\ref{th:GenFucnt} shows that the moment transform remains highly
structured for rational polypols, but it also shows why the polygonal statement
cannot be copied verbatim.  The useful precise formulation is the following.

\begin{theorem}[Dual transform statement]\label{th:dual-transform}
Let $\cP\subset\bR^2$ be a regular rational plane polypol and let
\[
        F_{\cP}(u,v)=2\int_{\cP}\frac{dxdy}{(1-ux-vy)^3}.
\]
Then $F_{\cP}$ extends, by analytic continuation, to a holonomic multivalued
function on the complement of an algebraic curve in the dual affine plane.  Its
singularities can occur only when the moving line
\[
        1-ux-vy=0
\]
passes through a vertex of $\cP$ or is tangent to one of the complexified
nonlinear boundary components of $\partial\cP$.

If $\cP$ is a polygon, no nonlinear tangency contribution occurs and
\[
        F_{\cP}(u,v)\,du\wedge dv
\]
is the ordinary canonical form of the polar polygon, up to the global orientation
sign.  If $\cP$ has curved rational boundary arcs, the same form should instead
be regarded as a branched, or twisted, dual canonical object: a period-valued
analogue of a canonical form attached to the moving-line family.  In general it
is not a meromorphic logarithmic canonical form of an ordinary positive geometry.
\end{theorem}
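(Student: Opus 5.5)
The plan is to reduce $F_{\cP}$ to a sum of boundary periods via Green's formula, as in Theorem~\ref{th:GenFucnt}, and then read off the singular locus from the parametric structure of each arc integral. Write $\ell(x,y)=1-ux-vy$ and observe that
\[
\frac{2\,dx\wedge dy}{\ell^3}=d\eta,\qquad \eta=\frac{x\,dy-y\,dx}{\ell^{2}},
\]
as one checks by differentiating and using Euler homogeneity ($2\ell^{-2}+ (2ux+2vy)\ell^{-3}=2\ell^{-3}$). For $(u,v)$ in the small neighborhood of the origin where $\ell>0$ on $\overline{\cP}$, Stokes' theorem gives
\[
F_{\cP}(u,v)=\sum_{j}\int_{\gamma_j}\eta ,
\]
where the $\gamma_j$ are the oriented rational boundary arcs. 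Each arc has a rational parametrization $t\mapsto (x_j(t),y_j(t))$, $t\in[a_j,b_j]$. Pulling back gives
\[
\int_{a_j}^{b_j}\frac{R_j(t)\,dt}{\ell_j(t;u,v)^2},
\]
with $R_j=x_jy_j'-y_jx_j'$ rational in $t$ and $\ell_j(t;u,v)=1-ux_j(t)-vy_j(t)$ rational in $t$ and affine in $(u,v)$.

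Next I would invoke the standard theory of parametric integrals of rational functions. The integral $I_j(u,v)$ of a rational function in $t$, depending rationally on parameters, over $[a_j,b_j]$ is computed by partial fractions in $t$. The result is an elementary function: rational functions of the endpoints and of the roots $t_k(u,v)$ of the numerator of $\ell_j$, together with logarithms of $(b_j-t_k)/(a_j-t_k)$. Hence it is holonomic, since holonomic functions are closed under integration over fibres, or simply because it is built from algebraic functions and logarithms. The analytic continuation of $I_j$ is then branched only where the integrand's pole set collides with the integration endpoints, or where poles in $t$ collide with each other. This is the Landau/Picard--Lefschetz principle for one-dimensional integrals. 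The first case means $\ell$ vanishes at an endpoint, i.e.\ the line $1-ux-vy=0$ passes through a vertex $(x_j(a_j),y_j(a_j))$ or $(x_j(b_j),y_j(b_j))$. The second case means $\ell_j(\cdot;u,v)$ has a multiple root, i.e.\ the line is tangent to the complexified arc (including at points at infinity or singular points of the rational curve, which are handled by adjoining them to the discriminant locus). The union of these conditions is an algebraic curve in the dual plane: the vertex dual lines together with the projective dual curves of the nonlinear components. For a line arc, $\ell_j$ is of degree one in $t$ and the discriminant condition is void, so only vertex lines survive.

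For the polygon case I would not redo anything: Proposition~\ref{prop:Poly} with $N=2$ already identifies $F_{\cP}\,du\wedge dv$ with the canonical form of $P^\circ$. Consistently, each edge integral is rational with poles only on the two vertex lines.

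For the curved case, the final claim is a ``not in general'' statement, so it suffices to exhibit a witness. I would use the upper half-disk computed later in the paper, whose transform contains arctangent and square-root terms. Its branching along the dual conic $u^2+v^2=1$ shows it is not a meromorphic (single-valued rational) form, so it cannot be an ordinary canonical form.

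The main obstacle I expect is making the singularity analysis rigorous after continuation. One must show that no additional singularities arise, for instance from poles of $R_j$ or from the parametrization's own poles, when $\ell_j$'s roots approach them. I would handle this by working projectively on $\bCP^1$, where the parameter's poles correspond to points at infinity of the curve that the line may pass through. I would also check that these potential singularities either cancel between adjacent arcs or are absorbed into the tangency/vertex locus, possibly allowing the stated curve to include the line at infinity contributions.
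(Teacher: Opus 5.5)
This is essentially the paper's proof. Stokes' theorem reduces $F_{\cP}$ to rational periods over the boundary arcs; endpoint and multiple-root (pinch) singularities give the vertex lines and the dual curves; Proposition~\ref{prop:Poly} handles polygons; and a curved example with half-integral branching along the dual conic settles the last claim.

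Two variations are worth noting. Your symmetric primitive $\eta=(x\,dy-y\,dx)/\ell^2$ avoids the paper's separate $u\neq0$ and $v\neq0$ formulas. Moreover, writing $x_j=X/Z$, $y_j=Y/Z$ gives $R_j/\ell_j^2=(XY'-YX')/(Z-uX-vY)^2$, so the obstacle you anticipate (poles of the parametrization, points at infinity of the curve) never arises. Your half-disk witness is also a genuine polypol, unlike the paper's vertex-free disk. However, you should verify its branching explicitly rather than infer it from the presence of $\arctan$ and square roots: near points of $u^2+v^2=1$ with $v>0$ one finds $F_{D_+}=2\pi(1-u^2-v^2)^{-3/2}+h$ with $h$ holomorphic, while near points with $v<0$ the principal branch is holomorphic. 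Finally, the paper's disk argument also excludes finite branched covers, which the statement does not require.
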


The distinction is essential.  In the polygonal case the singularities are
hyperplanes dual to the vertices and the transform is rational.  In the curved
case new components appear: the projective duals of the boundary curves.  Their
local contribution is often branched rather than logarithmic.  The full disk, although not used below as a canonical-form example, is
the simplest smooth rational-boundary model: its transform is
\[
        F_D(u,v)=\frac{2\pi}{(1-u^2-v^2)^{3/2}},
\]
so the dual conic $1-u^2-v^2=0$ carries a half-integral singularity rather than
a simple logarithmic pole.

\section {Proofs}\label{sec:proofs}
 
\begin{proof}[Proof of Theorem~\ref{th:GenFucnt}]
Put
\[
        Q_{u,v}(x,y)=1-ux-vy.
\]
The formula is first proved for $(u,v)$ in a sufficiently small neighbourhood of the origin.  In this region $Q_{u,v}$ has no zero on $\overline{\cP}$, and the integral defining $F_{\cP}$ is an ordinary convergent integral depending holomorphically on $(u,v)$.  The final statement then follows on the universal cover of the complement of the singular locus by analytic continuation.

Assume first that $u\ne 0$.  A direct calculation gives
\[
   d\left(\frac{dy}{2uQ_{u,v}(x,y)^2}\right)
       =\frac{dx\wedge dy}{Q_{u,v}(x,y)^3}.
\]
Indeed,
\[
 d\left(Q_{u,v}^{-2}dy\right)
 =\frac{2u\,dx\wedge dy}{Q_{u,v}^{3}}.
\]
Stokes' theorem therefore gives
\[
  \int_{\cP}\frac{dxdy}{Q_{u,v}(x,y)^3}
   =\frac{1}{2u}\int_{\partial\cP}\frac{dy}{Q_{u,v}(x,y)^2}.
\]
Equivalently, if $v\ne0$, using
\[
   d\left(-\frac{dx}{2vQ_{u,v}(x,y)^2}\right)
       =\frac{dx\wedge dy}{Q_{u,v}(x,y)^3},
\]
one obtains
\[
  \int_{\cP}\frac{dxdy}{Q_{u,v}(x,y)^3}
   =-\frac{1}{2v}\int_{\partial\cP}\frac{dx}{Q_{u,v}(x,y)^2}.
\]
Multiplying by $2$, according to the normalization in the definition of $F_{\cP}$, gives the boundary representations
\begin{equation}\label{eq:boundary-F-proof}
 F_{\cP}(u,v)=\frac{1}{u}\int_{\partial\cP}\frac{dy}{Q_{u,v}(x,y)^2}
              =-\frac{1}{v}\int_{\partial\cP}\frac{dx}{Q_{u,v}(x,y)^2},
\end{equation}
where the first identity is used on $u\ne0$ and the second on $v\ne0$.  Since both sides are holomorphic near the origin after taking the original area integral as the definition, these formulas determine the same analytic germ and extend across $u=0$ or $v=0$ by continuation.

Now decompose the oriented boundary into finitely many rational arcs
\[
        \partial\cP=\gamma_1\cup\cdots\cup\gamma_s.
\]
For each arc choose a rational parametrization
\[
        \gamma_j:\tau\in [a_j,b_j]\longmapsto (x_j(\tau),y_j(\tau)),
        \qquad x_j(\tau),y_j(\tau)\in\bR(\tau),
\]
compatible with the boundary orientation.  Substituting this parametrization in the first formula in \eqref{eq:boundary-F-proof} gives
\begin{equation}\label{eq:rational-arc-integrals}
 F_{\cP}(u,v)=\frac{1}{u}
       \sum_{j=1}^s\int_{a_j}^{b_j}
       \frac{y_j'(\tau)\,d\tau}
       {\bigl(1-u x_j(\tau)-v y_j(\tau)\bigr)^2},
\end{equation}
whenever $u\ne0$.  Similarly, for $v\ne0$,
\begin{equation}\label{eq:rational-arc-integrals-v}
 F_{\cP}(u,v)=-\frac{1}{v}
       \sum_{j=1}^s\int_{a_j}^{b_j}
       \frac{x_j'(\tau)\,d\tau}
       {\bigl(1-u x_j(\tau)-v y_j(\tau)\bigr)^2}.
\end{equation}
The integrands in \eqref{eq:rational-arc-integrals} and \eqref{eq:rational-arc-integrals-v} are rational functions of $\tau$ whose coefficients are rational functions of $(u,v)$.  Hence each boundary contribution is an explicitly computable integral of a rational differential on $\bP^1$.

By partial fractions over the algebraic closure of the coefficient field $\bC(u,v)$, an antiderivative of such a rational differential is a sum of a rational function of $\tau$ and logarithms of algebraic linear factors in $\tau$.  Over the real coefficient field, conjugate pairs of logarithms may equivalently be written as logarithmic and arctangent terms, with algebraic functions of $(u,v)$ appearing through the roots and discriminants of the denominator.  Evaluation at the endpoints $a_j,b_j$ therefore expresses $F_{\cP}(u,v)$, after analytic continuation, as an elementary multivalued function built from algebraic functions, rational functions, logarithms and arctangent terms.  This proves the asserted boundary-integral representation and the elementary character of the transform.

The holonomicity statement follows from the same representation.  Each summand in \eqref{eq:rational-arc-integrals} is a definite integral, over a fixed interval in the parameter $\tau$, of a rational function in the variables $(\tau,u,v)$.  Such parameter-dependent rational periods are holonomic functions of the parameters; equivalently, differentiating with respect to $u$ and $v$ produces a finite-dimensional module modulo exact rational differentials in $\tau$, which gives linear differential equations with polynomial coefficients annihilating the integral.  Summing over the finitely many arcs preserves holonomicity.

If $\cP$ is a polygon, each arc is a line segment.  On a segment $(x,y)=p+t(q-p)$, $0\le t\le1$, the denominator $Q_{u,v}(x,y)$ is affine in $t$, and the boundary integrals above are rational functions of $(u,v)$.  Summing over the sides gives a rational function.  Proposition~\ref{prop:Poly} identifies precisely this rational function, multiplied by $du\wedge dv$, with the canonical form of the polar polygon, up to the orientation convention fixed there.

Finally, curved rational arcs generally produce non-rational contributions because the rational denominator in the boundary parameter may have quadratic or higher degree factors.  As a smooth rational-boundary model, the full disk gives
\[
        F_{\mathrm{disk}}(u,v)=\frac{2\pi}{(1-u^2-v^2)^{3/2}},
\]
which is algebraic but not rational.  This shows that rationality is special to the polygonal case and completes the proof.
\end{proof}

\begin{proof}[Proof of Theorem~\ref{th:dual-transform}]
The holonomicity and multivalued continuation have already been established in
the proof of Theorem~\ref{th:GenFucnt}.  We recall the argument in the form
needed here.  Write
\[
        Q_{u,v}(x,y)=1-ux-vy
\]
and decompose the oriented boundary into rational arcs
$\gamma_j(\tau)=(x_j(\tau),y_j(\tau))$.  For $u\ne0$ one has
\[
 F_{\cP}(u,v)=\frac{1}{u}
       \sum_j\int_{a_j}^{b_j}
       \frac{y_j'(\tau)\,d\tau}
       {\bigl(1-u x_j(\tau)-v y_j(\tau)\bigr)^2},
\]
and there is an analogous formula with $x_j'$ and $v$ when $v\ne0$.  Thus each
summand is a period of a rational differential in the parameter $\tau$ with
coefficients in $\bC(u,v)$.

The possible singularities of such a period are the usual endpoint and critical
value singularities of a rational one-dimensional integral.  Endpoint
singularities occur when
\[
        1-u x_j(a_j)-v y_j(a_j)=0
        \quad\hbox{or}\quad
        1-u x_j(b_j)-v y_j(b_j)=0.
\]
These are precisely the hyperplanes dual to the vertices of the polypol.  The
remaining singularities occur when the denominator has a multiple zero in the
interior of the complexified parameter curve, that is, when for some boundary
component
\[
        1-u x_j(\tau)-v y_j(\tau)=0,
        \qquad
        u x_j'(\tau)+v y_j'(\tau)=0.
\]
Geometrically this means that the line $1-ux-vy=0$ is tangent to the
complexified boundary curve containing the arc.  Eliminating $\tau$ gives the
projective dual curve of that boundary component.  Hence the singular locus is
contained in the union of the vertex hyperplanes and these dual curves.

For a polygon every boundary component is a line segment.  The moving line has
no nonlinear tangency condition along a side, and the preceding boundary
integrals are rational.  Proposition~\ref{prop:Poly} identifies
$F_{\cP}(u,v)du\wedge dv$ with the canonical form of the polar polygon, up to
orientation.

It remains only to explain why the curved case is not an ordinary logarithmic
canonical form in general.  The full disk gives the simplest local model; for it one computes
\[
        F_D(u,v)\,du\wedge dv=
        \frac{2\pi\,du\wedge dv}{(1-u^2-v^2)^{3/2}}.
\]
At a generic point of the dual conic $q(u,v)=1-u^2-v^2=0$ this has local
exponent $-3/2$.  An ordinary canonical form has, by definition, a simple
logarithmic pole along each boundary component.  Even after a finite branched
cover $q=t^e$ which makes the form single-valued, the pull-back has pole order
$3e/2$ when $e$ is even, and remains multivalued when $e$ is odd.  Thus no such
cover turns the disk transform into a logarithmic canonical form.  This proves
the theorem.
\end{proof}

\section{Examples of canonical forms of polypols}

The purpose of this section is to record a few completely explicit canonical
forms.  They also fix the normalization used in Theorem~\ref{th:CanForm}.
Throughout, the sign of a displayed form depends on the orientation convention;
we choose the orientation for which the iterated residues at positively oriented
vertices are equal to $+1$.  We deliberately do not include the full disk among
the canonical-form examples: with no vertices, its boundary circle is not a
one-dimensional positive geometry in the recursive sense used here.  The disk
will reappear only as a simple model for the moment transform.

\begin{example}[The standard triangle]
Let
\[
        \Delta=\{(x,y):x\geq0,\; y\geq0,\; 1-x-y\geq0\}.
\]
The boundary equation is
\[
        B_\Delta(x,y)=xy(1-x-y).
\]
Since the boundary consists of three lines, the adjoint has degree zero.  Thus
\[
        \Omega_\Delta=\frac{dx\wedge dy}{xy(1-x-y)} .
\]
Indeed, near the vertex $(0,0)$ the last factor is a unit and the iterated
residue of $dx\wedge dy/(xy)$ is $1$.  The same local computation at the other
two vertices gives the remaining signs dictated by the boundary orientation.
Thus the constant numerator is the Wachspress adjoint in this case.
\end{example}

\begin{example}[The square]
For the square
\[
        Q=[-1,1]^2
\]
one has
\[
        B_Q(x,y)=(1-x^2)(1-y^2).
\]
Again the adjoint is constant, and the canonical form is
\[
        \Omega_Q=\frac{4\,dx\wedge dy}{(1-x^2)(1-y^2)} .
\]
For instance, restricting first to the side $y=-1$ gives the residue
$2dx/(1-x^2)$, which is the canonical one-form of the interval $[-1,1]$.
Taking the second residue at $x=1$ gives $1$, up to the standard orientation
sign.  This is the product form for the product positive geometry
$[-1,1]\times[-1,1]$.
\end{example}

\begin{example}[The upper half-disk]
Let
\[
        D_+=\{(x,y):x^2+y^2\leq1,\; y\geq0\}.
\]
Its boundary consists of the interval $[-1,1]$ on the $x$-axis and the upper
semicircle.  The reduced boundary equation is
\[
        B_{D_+}(x,y)=y(1-x^2-y^2).
\]
The total degree is three, hence the adjoint numerator has degree zero.  The
normalization is fixed by requiring that the residue along the diameter be the
canonical form of the interval $[-1,1]$.  This gives
\[
        \Omega_{D_+}=\frac{2\,dx\wedge dy}{y(1-x^2-y^2)} .
\]
Indeed,
\[
        \operatorname*{Res}_{y=0}\Omega_{D_+}
        =\frac{2\,dx}{1-x^2},
\]
which has residues $+1$ and $-1$ at the two endpoints, according to the induced
orientation.  The residue along the circular component is the corresponding
canonical one-form on the arc, with simple poles at the two endpoints.  Thus the
same constant adjoint simultaneously normalizes both boundary components.
\end{example}

\section{Examples of normalized moment-generating functions for polypols}

We keep the normalization used throughout the paper, namely
\[
        F_\Theta(u,v)=2\int_\Theta\frac{dxdy}{(1-ux-vy)^3}.
\]
Thus $F_\Theta(0,0)=2\operatorname{Area}(\Theta)$.  This convention is a useful check on all constants below.

{\bf 1. The disk.}  For the unit disk $D\subset\bR^2$ one has
\[
        F_D(u,v)=\frac{2\pi}{(1-u^2-v^2)^{3/2}} .
\]
Indeed, by rotational invariance it is enough to put $v=0$.  Then
\[
2\int_0^{2\pi}\int_0^1\frac{r\,dr\,d\phi}{(1-ur\cos\phi)^3}
       =2\pi(1-u^2)^{-3/2},
\]
and the stated formula follows by replacing $u^2$ by $u^2+v^2$.  Notice that
$F_D(0,0)=2\pi$, as required by the normalization.  The function is algebraic but not rational.

{\bf 2. The upper half-disk.}  Let
\[
        D_+=\{(r\cos\phi,r\sin\phi):0\le r\le1,\\ 0\le\phi\le\pi\}.
\]
Put
\[
        \rho^2=1-u^2-v^2.
\]
Then
\[
\boxed{
        F_{D_+}(u,v)=
        \frac{\pi+2\arctan\!\left(\frac{v}{\sqrt{1-u^2-v^2}}\right)}{(1-u^2-v^2)^{3/2}}
        +\frac{2v}{(1-u^2)(1-u^2-v^2)} .}
\]
This is the real branch near $(u,v)=(0,0)$, where the square root is positive and the arctangent vanishes at $v=0$.  In particular
$F_{D_+}(0,0)=\pi=2\operatorname{Area}(D_+)$.

For completeness let us indicate the calculation.  After integrating in $r$ one gets
\[
 F_{D_+}(u,v)=\int_0^\pi\frac{d\phi}{(1-u\cos\phi-v\sin\phi)^2}.
\]
With $t=\tan(\phi/2)$ and $s=(1+u)t-v$, this becomes
\[
F_{D_+}(u,v)=\int_{-v}^{\infty}
  \frac{2\big((1+u)^2+(s+v)^2\big)}{(1+u)(s^2+\rho^2)^2}\,ds .
\]
The antiderivative is
\[
\frac{2\arctan(s/\rho)}{\rho^3}
+\frac{1}{1+u}\frac{2s(u+u^2+v^2)/\rho^2-2v}{s^2+\rho^2},
\]
which gives the displayed formula after evaluating at $s=\infty$ and $s=-v$.
With the above branch convention the expression is real near the origin.

{\bf 3. A circular sector.}  Let
\[
\Phi=\{(r\cos\phi,r\sin\phi):0\le r\le1,
       \phi_0\le\phi\le\phi_1\}.
\]
Then the correctly normalized formula is
\[
\begin{aligned}
F_\Phi(u,v)
&=2\int_0^1\int_{\phi_0}^{\phi_1}
       \frac{r\,d\phi\,dr}{(1-r u\cos\phi-rv\sin\phi)^3}  \\
&=\int_{\phi_0}^{\phi_1}
       \frac{d\phi}{(1-u\cos\phi-v\sin\phi)^2} .
\end{aligned}
\]
Thus, setting $\tau_j=\tan(\phi_j/2)$, $j=0,1$, and again
$\rho^2=1-u^2-v^2$, one obtains
\[
F_\Phi(u,v)=
\int_{(1+u)\tau_0-v}^{(1+u)\tau_1-v}
  \frac{2\big((1+u)^2+(s+v)^2\big)}{(1+u)(s^2+\rho^2)^2}\,ds .
\]
Equivalently,
\[
F_\Phi(u,v)=\mathcal A\big((1+u)\tau_1-v\big)
             -\mathcal A\big((1+u)\tau_0-v\big),
\]
where
\[
\mathcal A(s)=
\frac{2\arctan(s/\rho)}{\rho^3}
+\frac{1}{1+u}\frac{2s(u+u^2+v^2)/\rho^2-2v}{s^2+\rho^2} .
\]
This formula also passes the constant check:
\[
        F_\Phi(0,0)=\phi_1-
        \phi_0=2\operatorname{Area}(\Phi).
\]

{\bf 4. The standard triangle.}  Let
\[
        \Delta=\{(x,y):x\geq0,\; y\geq0,\; x+y\leq1\}.
\]
Then
\[
        F_\Delta(u,v)=\frac{1}{(1-u)(1-v)} .
\]
Indeed,
\[
        F_\Delta(u,v)=2\int_0^1\int_0^{1-x}
        \frac{dy\,dx}{(1-ux-vy)^3},
\]
and elementary integration gives the displayed rational function.  Its value at
$(0,0)$ is $1$, which equals $2\operatorname{Area}(\Delta)$.
This is the simplest two-dimensional illustration of Proposition~\ref{prop:Poly}.

{\bf 5. A rectangle.}  Let
\[
        R_{a,b}=[0,a]\times[0,b].
\]
For $u,v\ne0$, with extension by continuity to $u=0$ or $v=0$, one has
\[
\boxed{
        F_{R_{a,b}}(u,v)=
        \frac{1}{uv}\left(
        \frac{1}{1-au-bv}-\frac{1}{1-au}
        -\frac{1}{1-bv}+1\right).}
\]
The formula follows by integrating first in $x$ and then in $y$:
\[
2\int_0^b\int_0^a\frac{dx\,dy}{(1-ux-vy)^3}
 =\frac{1}{u}\int_0^b\left(
 \frac{1}{(1-au-vy)^2}-\frac{1}{(1-vy)^2}\right)dy.
\]
At the origin the removable singularity has value $2ab$, again equal to
$2\operatorname{Area}(R_{a,b})$.

\section{Generating harmonic moments} \label{sec:harm}

The holomorphic, or harmonic, moments of a bounded plane domain $\Theta$ are

a natural one-dimensional shadow of the two-variable moment theory discussed
above.  Put
\[
        \mu_j(\Theta)=\int_\Theta z^j\,dz\,d\bar z,\qquad j=0,1,2,\ldots .
\]
Their ordinary generating function and the weighted generating function are
\[
        S_\Theta(t)=\sum_{j\geq 0}\mu_j(\Theta)t^j,
        \qquad
        G_\Theta(t)=\sum_{j\geq 0}(j+1)\mu_j(\Theta)t^j.
\]
Thus
\[
        G_\Theta(t)=(tS_\Theta(t))'.
\]
Equivalently,
\[
        S_\Theta(t)=\int_\Theta\frac{dz\,d\bar z}{1-tz},
        \qquad
        G_\Theta(t)=\int_\Theta\frac{dz\,d\bar z}{(1-tz)^2}.
\]
For $t\ne0$ one has
\[
        d\left(\frac{1}{t}\frac{d\bar z}{1-tz}\right)
        =\frac{dz\wedge d\bar z}{(1-tz)^2},
\]
and hence, with the usual limiting interpretation at $t=0$,
\begin{equation}\label{eq:harmonic-boundary-G}
        G_\Theta(t)=\frac{1}{t}\int_{\partial\Theta}
        \frac{d\bar z}{1-tz}.
\end{equation}
Similarly,
\[
        d\left(-\frac{\bar z\,dz}{1-tz}\right)
        =\frac{dz\wedge d\bar z}{1-tz},
\]
so that
\begin{equation}\label{eq:harmonic-boundary-S}
        S_\Theta(t)=-\int_{\partial\Theta}\frac{\bar z\,dz}{1-tz}.
\end{equation}
These formulae show that, for rational polypols, $S_\Theta$ and $G_\Theta$
are obtained from the same boundary calculus as $F_\Theta(u,v)$, but after
restricting the moving affine line to the complex one-parameter family
$1-tz=0$.

The relation with canonical forms is therefore indirect but useful.  Harmonic
moments do not themselves define the canonical form of the original polypol.
Rather, their generating function is a Cauchy--Fantappi\`e transform of the area
measure in one complex direction.  For polygons this transform is rational and
its denominator is controlled by the vertices.  This is the point of view of
Pasechnik--Shapiro \cite{PaSh}; in their notation, the normalized harmonic
moment generating function of a signed polygonal measure supported on triangles
with vertices in a finite set $S=\{z_0,\ldots,z_n\}$ is a linear combination of
terms of the form
\[
        \frac{c_{ijk}}{(1-z_i t)(1-z_jt)(1-z_kt)}.
\]
Thus the same vertex hyperplanes which occur in the polar canonical form also
control the harmonic moment transform.  Burman--Fr\"oberg--Shapiro
\cite{BuFrSh} then study the algebraic relations between the harmonic and
anti-harmonic moments of plane polygons.

A precise comparison with the two-variable transform is obtained by putting
$z=x+iy$ and restricting to the complex line $(u,v)=(t,it)$ in the dual
$(u,v)$-plane.  If
\[
        \widehat G_\Theta(t)=\int_\Theta\frac{dx\,dy}{(1-tz)^2},
\]
then
\begin{equation}\label{eq:harmonic-F-restriction}
        \widehat G_\Theta(t)=
        \frac{1}{t^2}\int_0^t s\,F_\Theta(s,is)\,ds .
\end{equation}
Indeed, expanding both sides at the origin gives
\[
        F_\Theta(t,it)
        =2\int_\Theta\frac{dx\,dy}{(1-tz)^3}
        =\sum_{j\geq0}(j+1)(j+2)
          \left(\int_\Theta z^j\,dx\,dy\right)t^j,
\]
and integration in \eqref{eq:harmonic-F-restriction} changes the coefficient
$(j+1)(j+2)$ into $(j+1)$.  Up to the harmless convention-dependent factor
relating $dz\wedge d\bar z$ and $dx\wedge dy$, this is exactly the weighted
harmonic moment generating function $G_\Theta$.

Consequently the harmonic moment generating function is worth keeping in the
present paper, but only as a projected or one-dimensional transform of the
Fantappi\`e/canonical-form story.  It should not be advertised as a new
canonical form.  Its role is instead to record the restriction of the dual
period to the complex line $v=it$, followed by the elementary integral operator
in \eqref{eq:harmonic-F-restriction}.  In the polygonal case this restriction
is rational and reflects the polar positive geometry; in the curved polypol
case it may become algebraic or logarithmic for the same reasons as the full
transform $F_\Theta(u,v)$.

\section{Positive-geometry interpretation and further directions}
\label{sec:beyond-polytopes}

The examples and Theorem~\ref{th:dual-transform} suggest the following
interpretation.  The relation between moment-generating functions and canonical
forms is not a coincidence peculiar to polytopes.  What is special about
polytopes is that the dual period is rational and logarithmic.  Once curved
boundary components are allowed, the same incidence geometry survives but the
analytic object is generally a branched period.

Indeed, the kernel
\[
        (1-ux-vy)^{-3}
\]
introduces a moving line in the original plane.  Vertex singularities appear
when this line passes through an endpoint of a boundary arc; these are the same
hyperplanes which define the polar polygon in the polyhedral case.  Curved arcs
contribute additional singularities when the moving line becomes tangent to the
complexified boundary curve; these are precisely the projective dual curves of
the nonlinear boundary components.  Thus projective duality still controls the
singular support, even though the resulting function need not be rational.

The full disk illustrates the singularity mechanism in the cleanest possible way.  The singular locus
of
\[
        F_D(u,v)=\frac{2\pi}{(1-u^2-v^2)^{3/2}}
\]
is the dual conic to the boundary circle.  The exponent $3/2$ shows that this is
not an ordinary logarithmic canonical form, but the location of the singularity
is exactly the one predicted by the positive-geometry picture.  The upper
half-disk gives the elementary mixed example: its diameter and endpoints produce
rational vertex-type terms, while the circular arc produces the branched dual
conic term.

This leads to a useful terminology.  For a rational polypol one may regard
$F_{\cP}(u,v)du\wedge dv$ as a \emph{dual branched canonical form}, with the
understanding that this is not a canonical form in the strict sense of
Arkani-Hamed--Bai--Lam unless $\cP$ is polygonal.  The adjective ``branched''
records the replacement of logarithmic residues by monodromy of a
one-dimensional rational period.

Several problems remain.  First, one should describe the minimal
Picard--Fuchs system annihilating $F_{\cP}$ and compare its singular locus with
the vertex hyperplanes and dual boundary curves.  Secondly, the adjoint
polynomial $A_{\cP}$ in the primal canonical form should be related more
explicitly to the cancellation of spurious singularities in the boundary-period
representation of $F_{\cP}$.  Finally, it would be useful to identify natural
classes of curved polypols for which the dual period is algebraic, logarithmic,
or genuinely transcendental.

\begin{conjecture}\label{conj:singular-locus}
For a regular rational polypol $\cP$, the singular locus of the holonomic system
annihilating $F_{\cP}$ is contained in the union of the vertex hyperplanes and
the projective duals of the nonlinear boundary components.  If the boundary is
sufficiently generic, all these components occur.
\end{conjecture}

\noindent
\emph{Acknowledgements.} The author is sincerely grateful to Professor Dmitrii Pasechnik for discussions.

\end{document}